\theoremstyle{plain}
\newtheorem{thm}{Theorem} 
\newtheorem{lem}[thm]{Lemma}
\newtheorem{prop}[thm]{Proposition}
\newtheorem{rem}[thm]{Remark}
\providecommand{\ind}{\mathds{1}} 
\providecommand{\sm}{\setminus}
\providecommand{\N}{\mathbb{N}}
\providecommand{\R}{\mathbb{R}} 
\providecommand{\Z}{\mathbb{Z}}
\providecommand{\eps}{\varepsilon}
\providecommand{\wto}{\rightharpoonup}
\providecommand{\skp}[2]{\langle#1,#2\rangle}
\providecommand{\les}{\lesssim}
\DeclareMathOperator{\supp}{supp}
\DeclareMathOperator{\loc}{loc}
\DeclareMathOperator{\sol}{sol}
\DeclareMathOperator{\curl}{curl}
\DeclareMathOperator{\spa}{span}
\renewcommand{\qed}{\hfill $\Box$}
\newcommand{\vecIII}[3]{
\ensuremath{
\begin{pmatrix}
#1 \\ #2 \\ #3 \\
\end{pmatrix}}}
\renewcommand{\r}[1]{\textcolor{red}{#1}}
\begin{document}

\allowdisplaybreaks

\title{Dual variational methods for static Nonlinear Maxwell's Equations}

\author{Rainer Mandel\textsuperscript{1}}

\subjclass[2020]{35J60, 35Q61}

\keywords{}
\date{\today}   

\begin{abstract}
  We prove the existence of a ground state and infinitely many geometrically distinct solutions for
  static nonlinear Maxwell's equations on $\R^3$. Our existence result relies on a variant of the Symmetric
  Mountain Pass Theorem that applies to periodic as well as vanishing nonlinearities. 
  It is applied in a dual variational setting and thus provides an  alternative approach with respect to the
  direct variational method introduced by Mederski.
\end{abstract}

\maketitle
\allowdisplaybreaks
\setlength{\parindent}{0cm}

\section{Introduction}
    
In this note we provide a dual variational framework for nonlinear Maxwell's equations of the form 
\begin{equation}\label{eq:NLCurlCurl}
  \nabla\times \mu(x)^{-1} \nabla\times E =  f(x,E) \qquad\text{in }\R^3.
\end{equation}
Here, $E:\R^3\to\R^3$ stands for the electric field, $\mu(x)\in\R^{3\times 3}$ is the permeability matrix
and the nonlinearity $f(x,E)\in\R^3$ represents the electric displacement field
within the propagation medium, see~\cite[pp. 825-826]{Med_GS}. In the past years, several existence results
for nontrivial solutions of \eqref{eq:NLCurlCurl} have been found. Most of them 
rely on variational methods so that $f(x,E) = \partial_E F(x,E)$ is assumed for some
scalar-valued function $F$. 
These results, for $\mu(x)=I_{3\times 3}$, may be separated into two classes: the first deals with 
cylindrically symmetric solutions that are automatically divergence-free. Cylindrical symmetry means that, up to permutation of
coordinates, the solution has the form
$$
  E(x) = u\big(|(x_1,x_2)|,x_3\big) \vecIII{-x_2}{x_1}{0} 
  \qquad (x\in\R^3).
$$
The curl-curl operator acts like the classical Laplacian on such functions so that several standard tools 
from elliptic PDEs such as the local compactness of Sobolev embeddings can be used in a straightforward
manner.
Of course, the nonlinearity $f$ then needs to be cylindrically symmetric with respect to $x$ as well.
The first contribution in this direction is due to Azzollini, Benci, D'Aprile, Fortunato
\cite[Theorem~1]{AzzBenDApFor_Static} in the case of constant coefficients. A model nonlinearity for their
and all subsequent existence results is given by
\begin{equation*}
  f(x,E) = \min\{|E|^{p-2},|E|^{q-2}\}E \qquad\text{where } 2<p<6<q<\infty.
\end{equation*}
Equations of the form~\eqref{eq:NLCurlCurl} with more general
nonlinearities and cylindrically symmetric $x$-dependence were discussed  in
\cite{DApSic_Magnetostatic,BaDoPlRe_GroundStates,HirRei_Cylindrical}.
The second class of papers~\cite{Med_GS,MedSchSzu_Multiple} by Mederski and coauthors deals with
$\Z^3$-periodic nonlinearities where a cylindrically symmetric ansatz does not make sense.
The variational approach developed in these papers is very advanced and many difficulties have to be
overcome to prove the existence of a nontrivial solutions via some detailed analysis
of generalized Palais-Smale sequences for the energy functional.
Our goal is to show that Maxwell's  equations admit a convenient dual formulation that allows to prove
existence results via more standard critical point theorems, notably the Symmetric Mountain Pass Theorem
\cite{Willem,AmbRab}. In order to deal with vanishing and periodic nonlinearities at the same time, we
use a variant of this theorem from~\cite{Man_TimeHarmonic}. 
  
 \medskip
  
We shall use the following assumptions on the data: 
  \begin{itemize}
    \item[(A1)] $\mu\in L^\infty(\R^3;\R^{3\times 3})$ is symmetric and uniformly positive definite.   
  \item[(A2)] $f:\R^3\times\R^3\to\R^3$ is a Carath\'{e}odory function with $f(x,E)=f_0(x,|E|)|E|^{-1}E$
  where $s\mapsto s^{-1}f_0(x,s)$ is positive, increasing and piecewise continuously differentiable on $(0,\infty)$.
  There are exponents $p\in (2,6),q\in [p,\infty)$ and positive functions $\Gamma_1,\Gamma_2\in
  L^\infty(\R^3)$ such that for almost all $x\in\R^3$ and all $s>0$
    \begin{align*}
    \frac{1}{2}f_0(x,s)s - \int_0^s f_0(x,t)\,dt  \geq  \min\Big\{\Gamma_1(x) s^p,\Gamma_2(x) s^q\Big\}
    \sim s^2 \partial_s f_0(x,s).  
  \end{align*}
\end{itemize}
We say that $(A)_{per}$ holds if in addition to (A1),(A2) we have $q>6$ and the functions $x\mapsto
\mu(x)$ and $x\mapsto f(x,E)$ are $\Z^3$-periodic for all $E\in\R^3$. In that situation two solutions of
\eqref{eq:NLCurlCurl} are said to be geometrically distinct if they are not $\Z^3$-translates of each other. 
We say that $(A)_{van}$ holds if in addition to (A1),(A2) one of the following conditions hold:
\begin{equation}\label{eq:van} 
   \begin{cases}
  \;q>6\quad\text{and}\quad \Gamma_1(x)\to 0 \text{ or }\Gamma_2(x)\to 0 \text{ as }|x|\to\infty, \\ 
  \;q= 6\quad\text{and}\quad \Gamma_2(x)\to 0\text{ as } |x|\to\infty, \\ 
  \;q<6\quad\text{and}\quad \Gamma_2(x)\to 0\text{ as } |x|\to\infty \text{  and  } \Gamma_2\in L^s(\R^3)
  \text{ for some } s<\frac{6}{6-q}.
  \end{cases}
\end{equation}  
In that case, geometrically distinct solutions are nothing but distinct solutions. 
A bound state is a nontrivial critical point of the associated energy functional 
$$
    I(E):= \frac{1}{2} \int_{\R^3} \mu(x)^{-1}(\nabla\times E)\cdot (\nabla\times E)\,dx - \int_{\R^3}
    F(x,E)\,dx \qquad\text{where }F(x,E):=\int_0^{|E|} f_0(x,s)\,ds.
$$ 
over all measurable functions $E$ satisfying $\nabla\times E\in L^2(\R^3;\R^3)$ and $F(\cdot,E)\in L^1(\R^3)$. 
A bound state with least energy among all bound states is called a ground state. 
Our main result reads as follows:
     
  \begin{thm} \label{thm:main} 
    Assume $(A)_{van}$ or $(A)_{per}$.  Then~\eqref{eq:NLCurlCurl} has a ground state and infinitely many geometrically
    distinct bound states.
  \end{thm}

This appears to be the first existence result involving non-constant $\mu$ or vanishing coefficients given
that the assumptions on the nonlinearity in the papers~\cite{Med_GS,MedSchSzu_Multiple} are not satisfied.
 Nevertheless, the most interesting outcome of this paper might be that the dual formulation has some
 technical advantages compared to the standard variational approach and that the critical point theorem 
 from~\cite{Man_TimeHarmonic} applies to vanishing and periodic nonlinearities.  

\medskip
 
  \begin{rem} ~
    \begin{itemize}
      \item[(a)] Dual variational methods for non-static time-harmonic
      Maxwell's equations can be found in~\cite{Man_TimeHarmonic}. The function spaces and the
      restrictions on the nonlinearities are different in the static case.  
      \item[(b)] If one of the nonnegative functions $\Gamma_1,\Gamma_2$ vanishes on some nonempty open
      set, then the ground state level is zero and the set of ground states is given by all gradients
      with support in $\{\Gamma_1=0\}\cup\{\Gamma_2=0\}$. This follows from the fact that a
      critical point at energy level $0$ satisfies 
      $$
        0 
        =  I(E)- \frac{1}{2} I'(E)[E]
        = \int_{\R^3} F(x,E)-\frac{1}{2}f(x,E)\cdot E \,dx
        \gtrsim \int_{\R^3}  \min\Big\{\Gamma_1(x) |E|^p,\Gamma_2(x) |E|^q\Big\} \,dx
      $$ 
      by assumption (A2). It is not known whether ground states exist for sign-changing $\Gamma_1,\Gamma_2$
      where the dual variational method leads to the study of a strongly indefinite functional, 
      cf.~\cite{ManYes_DualSignchanging}. So, in contrast to the case $\Gamma_1,\Gamma_2>0$, passing to the
      dual formulation does not seem to simplify the analysis. The existence
      of ground states and infinitely many bound states remains open in this case.
      \item[(c)] Theorem~\ref{thm:main}  covers subcritical power-type nonlinearities with decaying
      coefficients like 
      $$
        f(x,E) = \Gamma(x)|E|^{p-2}E \quad\text{where}\quad 2<p<6,\; 
        0<\Gamma(x)\les (1+|x|)^{-\frac{6-p}{2}-\sigma},\;\sigma>0.
      $$
    \end{itemize}
  \end{rem}

\medskip

  In the following   the symbol $\les$ stands for $\leq C$ for some positive number $C$, similarly for
 $\gtrsim$. We write $A\sim B$ if $A\les B$ and $B\les A$. The exponents  $p',q'$ denote, as usual, the
 H\"older conjugates given by $\frac{1}{p}+\frac{1}{p'}=\frac{1}{q}+\frac{1}{q'}=1$. We fix the standard norm
 $\|\cdot\|_r$ on $L^r(\R^3;\R^3)$. 
 The divergence operator $\nabla\cdot$ and the curl operator
 $\nabla\times $ are understood in the distributional sense.  We shall use that the
 curl operator is symmetric on $C_0^\infty(\R^3;\R^3)$, which follows from the
 identity 
 $$ 
   \nabla\cdot (f\times g) = (\nabla\times f)\cdot g - f \cdot (\nabla\times g).
 $$

 \section{An equivalent dual formulation}
  
  We  use a dual variational approach to prove Theorem~\ref{thm:main}. This means that instead of considering
  the electric field $E$ as the unknown, we treat~\eqref{eq:NLCurlCurl} as a variational problem for the
  polarization vector field $P(x):= f(x,E(x))$. This leads to the equation
   \begin{equation}\label{eq:NLCurlCurldual}
    \mathcal L_\mu(\psi(x,P)) = P \qquad\text{in
    }\R^3\qquad\text{where}\quad
    \mathcal L_\mu(E):= \nabla\times \mu(x)^{-1} \nabla\times E. 
  \end{equation}  
  The function $\psi(x,\cdot)$ denotes the inverse of $f(x,\cdot)$.   
  We shall prove that $\psi$ exists with the following properties:
 \begin{itemize}
  \item[(A2')]  $\psi:\R^3\times\R^3\to\R^3$ is a Carath\'{e}odory function with  
  $\psi(x,P)=\psi_0(x,|P|)|P|^{-1}P$ where 
  $z\mapsto z^{-1}\psi_0(x,z) \text{ is positive, decreasing and piecewise continuously differentiable on
  }(0,\infty)$.
  There are exponents $p\in (2,6), q\in [p,\infty)$ and positive functions
  $\Gamma_1,\Gamma_2\in L^\infty(\R^3)$ such that for almost all $x\in\R^3$ and all $z>0$
  \begin{align*}
    \int_0^z \psi_0(x,s)\,ds -\frac{1}{2} \psi_0(x,z)z 
    \gtrsim z \max\{ (\Gamma_1(x)^{-1}z)^{p'-1},(\Gamma_2(x)^{-1}z)^{q'-1}\}
    \sim z^2 \partial_z \psi_0(x,z).       
  \end{align*}
\end{itemize} 
Again, we say that $(A')_{per}$ holds if in addition to (A1),(A2') we have $q>6$ and the functions $x\mapsto
\mu(x)$ and $x\mapsto f(x,z)$ is $\Z^3$-periodic on $\R^3$ for all $z>0$. Similarly, $(A')_{van}$ holds if in
addition to (A1),(A2') one of the conditions in~\eqref{eq:van} holds.
 
 \begin{prop}\label{prop:Apsi}
    Assume that $f:\R^3\times\R^3\to\R^3$ satisfies~(A2). Then
    $\psi(x,\cdot):=f(x,\cdot)^{-1}$ exists for almost all $x\in\R^3$ and satisfies (A2'). In particular,
    $(A)_{van},(A)_{per}$ implies $(A')_{van},(A')_{per}$, respectively.
  \end{prop}
  \begin{proof}
    By assumption (A2), for almost all $x\in\R^3$ the function $z\mapsto f_0(x,z)$ is
    positive, continuous and piecewise continuously differentiable  on $(0,\infty)$ with positive derivative
    as well as $f_0(x,z)\to 0$ as $z\to 0$ and $f_0(x,z)\to +\infty$ as $z\to\infty$. So
    $f_0(x,\cdot):[0,\infty)\to [0,\infty)$ admits a positive and piecewise continuously differentiable  
    inverse $\psi_0(x,\cdot):=f_0(x,\cdot)^{-1}$ with positive derivative for such $x\in\R^3$.
    Moreover, $z\mapsto z^{-1} \psi_0(x,z)$ is decreasing on $(0,\infty)$ given that $s\mapsto s^{-1}
    f_0(x,s)$ is increasing on $(0,\infty)$. This implies $f(x,\cdot)^{-1}=\psi(x,\cdot)$ where
    $\psi(x,P):=\psi_0(x,|P|)|P|^{-1}P$. Assumption (A2) implies, for $z:=f_0(x,s)$ and $s=\psi_0(x,z)$, 
	\begin{align*}
	  z \sim \min\Big\{\Gamma_1(x) s^{p-1},\Gamma_2(x) s^{q-1}\Big\} \quad\text{and hence}\quad
	  s  \sim  \max\Big\{(\Gamma_1(x)^{-1}z)^{p'-1},(\Gamma_2(x)^{-1}z)^{q'-1}\Big\}.
	\end{align*}
	This gives
	\begin{align*}
	  \partial_z \psi_0(x,z) 
	  &= \frac{1}{\partial_s f_0(x,s)} 
	  \sim \frac{1}{\min\{\Gamma_1(x) s^{p-2},\Gamma_2(x)s^{q-2}\}} 
	  \sim \frac{s}{z}
	  \sim \max\Big\{ \Gamma_1(x)^{1-p'}z^{p'-2},\Gamma_2(x)^{1-q'}z^{q'-2}\Big\}.  
	\end{align*}
	Furthermore, by differentiation we find the identity
	$$
	  \int_0^{f_0(x,s)} \psi_0(x,t)\,dt + \int_0^s f_0(x,t)\,dt = sf_0(x,s)
	  \qquad\text{for all }s\geq 0
	$$
	and conclude 
	\begin{align*}
	  \int_0^z \psi_0(x,t)\,dt-\frac{1}{2} \psi_0(x,z)z
	  &=\frac{1}{2} f_0(x,s)s - \int_0^s f_0(x,t)\,dt \\
	  &\gtrsim \min\Big\{\Gamma_1(x) s^{p},\Gamma_2(x) s^{q}\Big\}     \\
	  &\sim  sz \\
	  &\sim \max\Big\{  \Gamma_1(x)^{1-p'} z^{p'},\Gamma_2(x)^{1-q'} z^{q'}\Big\}.
	\end{align*}
	So (A2') holds and the claim is proved.
  \end{proof}
 
  So the task is to find suitable solutions $P$ of \eqref{eq:NLCurlCurldual} for functions $\psi$ satisfying
  $(A')_{van}$ or $(A')_{per}$. The dual method requires to invert the linear differential operator in a
  suitable sense. To do this, we need appropriate function spaces. Given that $\mathcal L_\mu(E)=\mathcal
  L_\mu(E+\nabla\phi)$ for all $\phi\in C_0^\infty(\R^3)$, it is mandatory to look for uniquely determined
  solutions of \eqref{eq:NLCurlCurldual} within divergence-free functions.  Define   
 \begin{align*}
   H&:= \Big\{E\in L^1_{\loc}(\R^3;\R^3): \nabla\times E \in L^2(\R^3;\R^3),
   \;\nabla\cdot E=0 \Big\}, \\
   \skp{E}{\tilde E}_\mu
   &:=   \int_{\R^3} \mu(x)^{-1}(\nabla\times E)\cdot (\nabla\times \tilde E)\,dx.
 \end{align*}
 The corresponding norm is $\|\cdot\|_\mu$ and we omit the index if $\mu(x)=I_{3\times 3}$ is the identity
 matrix. In this case we have $\mathcal L_\mu = -\Delta$ on $H$ and, accordingly, $\mathcal L_\mu^{-1} P = (-\Delta)^{-1}P 
    =  K\ast P$  where $K(z)= \frac{1}{4\pi|z|}$.

 \begin{prop} \label{prop:embeddingH}
   Assume (A1). Then $(H,\skp{\cdot}{\cdot})$ is a Hilbert space continuously embedded
   into $L^6_{\sol}(\R^3;\R^3)$ and compactly embedded into $L^r_{\sol}(B;\R^3)$ for bounded balls
   $B\subset\R^3$ with $r<6$ where, for measurable $\Omega\subset\R^3$, 
   $$
     L^p_{\sol}(\Omega;\R^3):= \big\{ h\in L^p(\Omega;\R^3): \nabla\cdot h=0\big\}. 
   $$
 \end{prop}
 \begin{proof}
   We omit the proof of the Hilbert space property. The embedding into $L^6_{\sol}(\R^3;\R^3)$ is a consequence
   of Sobolev's Embedding Theorem $\dot H^1(\R^3)\hookrightarrow L^6(\R^3)$ because of  
   $$
     \|E\|_\mu^2 
     = \int_{\R^3} \mu(x)^{-1}(\nabla\times E)\cdot (\nabla\times E)\,dx
     \sim \int_{\R^3} |\nabla\times E|^2\,dx
     \sim \int_{\R^3} |\nabla E|^2\,dx.
   $$    
 \end{proof}
 
 To solve~\eqref{eq:NLCurlCurldual} in $H$ we use the Lax-Milgram Lemma or the Riesz Representation Theorem. 
 
  
 \begin{prop}\label{prop:Lmu}
   Assume (A1). Then, for all $P\in L^{6/5}_{\sol}(\R^3;\R^3)$, the linear boundary value problem 
   $$ 
     \int_{\R^3} \mu(x)^{-1}(\nabla\times E)\cdot (\nabla\times \tilde E)\,dx 
     = \int_{\R^3} P\cdot\tilde E\,dx
     \qquad\text{for all }\tilde E\in H
   $$
   has a unique solution $E\in H$, denoted by $\mathcal L_\mu^{-1}P$. The selfdual operator
   $$
     L^{6/5}_{\sol}(\R^3;\R^3)\to L^6_{\sol}(\R^3;\R^3),\qquad P\mapsto \mathcal L_\mu^{-1}P
   $$ 
   is symmetric with 
   \begin{equation} \label{eq:Lmu}
     \|\mathcal L_\mu^{-1}P\|_\mu^2 = \int_{\R^3} \mathcal L_\mu^{-1}P\cdot P\,dx,\qquad  
     \|\mathcal L_\mu^{-1}P\|_\mu \les \|P\|_{\frac{6}{5}},\qquad  
     \int_{\R^3} \mathcal L_\mu^{-1}P\cdot P \,dx 
     \sim  \int_{\R^3} (-\Delta)^{-1}P\cdot P \,dx. 
   \end{equation}
 \end{prop}
 \begin{proof}
   We concentrate on the last estimate in \eqref{eq:Lmu}. Plugging in the test functions 
   $\tilde E := (-\Delta)^{-1}P$ into the weak PDE solved by $E:=\mathcal L_\mu^{-1}P$, we find
   using (A1)
 \begin{align*}
   \int_{\R^3} (-\Delta)^{-1} P\cdot P\,dx
   &= \int_{\R^3} P\cdot \tilde E \,dx 
   \;=\; \int_{\R^3} \mu(x)^{-1}(\nabla\times E)\cdot (\nabla\times \tilde E)\,dx \\
   &= \skp{E}{\tilde E}_\mu 
   \;\leq\; \|E\|_\mu\|\tilde E\|_\mu 
   \;\les\; \|E\|_\mu\|\tilde E\|  \\
   &\stackrel{\eqref{eq:Lmu}}= 
   \sqrt{\int_{\R^3} \mathcal L_\mu^{-1}P\cdot P\,dx}\cdot \sqrt{\int_{\R^3} (-\Delta)^{-1} P\cdot P\,dx}. 
 \end{align*} 
 The reverse estimate is proved analogously. 
  \end{proof}
 
 In view of Proposition~\ref{prop:Lmu} we have to find $P\in L^{6/5}_{\sol}(\R^3;\R^3)$ such that
 $\psi(x,P)-\mathcal L_\mu^{-1} P$ is a gradient. This turns out to be the Euler-Lagrange equation of the
 functional 
 \begin{equation} \label{eq:functionalJdual}
    J(P) := \int_{\R^3} \Psi(x,P)\,dx - \frac{1}{2}\int_{\R^3} \mathcal L_\mu^{-1}P\cdot P\,dx,\qquad  
    \Psi(x,E):=\int_0^{|E|} \psi_0(x,s)\,ds
  \end{equation}
  over a suitable function space $X\subset L^{6/5}_{\sol}(\R^3;\R^3)$. 
  We have to make sure that each integral is well-defined for $P\in X$. 
  To find a reasonable candidate for $X$  we note 
 \begin{align}  \label{eq:Zproperty}
   \begin{aligned}
   \int_{\R^3}\Psi(x,P)\,dx<\infty
   &\quad\stackrel{(A2')}\Leftrightarrow\quad
   \int_{\R^3}\max\Big\{  \Gamma_1(x)^{1-p'} |P|^{p'},\Gamma_2(x)^{1-q'} |P|^{q'}\Big\}\,dx<\infty \\
   &\quad\Leftrightarrow\quad
   P\in Z:= \Gamma_1^{1/p}L^{p'}(\R^3;\R^3)\cap \Gamma_2^{1/q}L^{q'}(\R^3;\R^3).
 \end{aligned}
 \end{align} 
 Then $Z$ is a reflexive Banach space equipped with the norm
 \begin{equation}\label{eq:def_norm}
   \|P\| := \|\Gamma_1^{-\frac{1}{p}} P\|_{p'}+\|\Gamma_2^{-\frac{1}{q}} P\|_{q'}.
 \end{equation}  
 We then define the subspace
 $$
   X:= Z \cap \big(\nabla\times L^2(\R^3;\R^3)\big) 
   = \Big\{P\in Z: P=\nabla\times G \text{ for some } G\in L^2(\R^3;\R^3)\Big\}.
 $$

%

 \begin{lem} \label{lem:ReflexiveBanachSpace}
   Assume $(A')_{per}$ or $(A')_{van}$.
   Then $X$ is a reflexive Banach space that is continuously embedded into $L^{6/5}_{\sol}(\R^3;\R^3)$.
 \end{lem} 
 \begin{proof}
   We first prove the embedding and start with the case $q\geq 6$. We then choose $\theta\in (0,1]$ such that
   $\frac{5}{6}= \frac{1-\theta}{p'}+\frac{\theta}{q'}$. Then we have for $P\in X$
    $$
      \|P\|_{\frac{6}{5}}
      \leq \|P\|_{p'}^{1-\theta}\|P\|_{q'}^\theta 
      \les \|P\|_{p'} + \|P\|_{q'} 
      \les \|\Gamma_1^{-\frac{1}{p}}P\|_{p'}+  \|\Gamma_2^{-\frac{1}{q}}P\|_{q'} 
      = \|P\|. 
    $$
    In the case $q<6$, which only occurs under assumption $(A')_{van}$, we have~\eqref{eq:van} and thus
    $\Gamma_2\in L^{\frac{6}{6-q}}(\R^3)$. Hence,   
    \begin{align*}
      \|P\|_{\frac{6}{5}}
      \leq \|\Gamma_2^{-\frac{1}{q}}P\|_{q'} \|\Gamma_2^{\frac{1}{q}}\|_{\frac{6q}{6-q}}
      \les \|P\|.  
    \end{align*} 
    
   \medskip
 
   We now prove the reflexivity. It suffices to show that $X$ is a closed subspace of the reflexive Banach
   space $Z$.
   So assume $P_n\to P$ in $Z$ with $P_n\in X$, set $E_n:=\mathcal L_\mu^{-1}(P_n),E:=\mathcal L_\mu^{-1}(P)$. The
   above estimate implies $P_n\to P$ in $L^{6/5}_{\sol}(\R^3;\R^3)$. 
   So Proposition~\ref{prop:Lmu} implies $E_n\to E$ in $H$   and thus, by Proposition~\ref{prop:embeddingH},
   in $L_{\sol}^6(\R^3;\R^3)$. This ensures 
   $$
     \infty
     >\int_{\R^3} \mathcal L_\mu^{-1}P\cdot P\,dx
     = \int_{\R^3} E\cdot P\,dx
     = \lim_{n\to\infty} \int_{\R^3} E_n\cdot P_n\,dx
     = \lim_{n\to\infty} \int_{\R^3} \mathcal L_\mu^{-1}(P_n) \cdot P_n\,dx. 
   $$ 
   From $P_n\in X$ we get $P_n=\nabla\times G_n$ for some $G_n\in L^2(\R^3;\R^3)$.   
   Using the classical Helmholtz Decomposition of $L^2(\R^3;\R^3)$, we may w.l.o.g. assume that $G_n$ is
   divergence-free because the curl-operator vanishes on gradients. So    
  $\nabla\times (\mu^{-1} \nabla\times E_n-G_n)= \mathcal L_\mu(E_n) - P_n = 0$ holds in the weak sense and
  thus $\mu^{-1}\nabla\times E_n = G_n + \nabla \psi_n$
  for some $\psi_n\in \dot H^1(\R^3)$. Hence,
  \begin{align*}
     \infty
     &>
     \lim_{n\to\infty} \int_{\R^3} \mathcal L_\mu^{-1}(\nabla\times G_n) \cdot (\nabla\times G_n)\,dx \\
     &\stackrel{\eqref{eq:Lmu}}= \lim_{n\to\infty} \| \mathcal L_\mu^{-1}(\nabla\times G_n)\|_\mu^2  
     \;=\; \lim_{n\to\infty} \|E_n\|_\mu^2  \\
     &=  \lim_{n\to\infty} \int_{\R^3} \mu^{-1}(\nabla\times E_n)\cdot (\nabla\times E_n)  \,dx    
     \;\sim\;  \lim_{n\to\infty} \int_{\R^3} |\mu^{-1}\nabla\times E_n|^2  \,dx    \\
     &= \lim_{n\to\infty} \int_{\R^3} |G_n+\nabla\psi_n|^2\,dx        
     \;=\; \lim_{n\to\infty} \int_{\R^3} |G_n|^2+|\nabla\psi_n|^2 \,dx.  
  \end{align*}  
   So $(G_n)$ is bounded in $L^2(\R^3;\R^3)$ and we may select a subsequence, again denoted by $(G_n)$,
   that converges weakly   to some $G\in L^2(\R^3;\R^3)$.
   This function satisfies, for all $\phi\in C_0^\infty(\R^3;\R^3)$,
   \begin{align*}
     \int_{\R^3} P \cdot\phi\,dx 
     &= \lim_{n\to\infty} \int_{\R^3} P_n \cdot\phi\,dx
     = \lim_{n\to\infty} \int_{\R^3} (\nabla\times G_n) \cdot\phi \,dx \\
     &= \lim_{n\to\infty} \int_{\R^3} G_n \cdot (\nabla\times \phi)\,dx
     =  \int_{\R^3} G  \cdot (\nabla\times \phi)\,dx.
  \end{align*}
  This implies $P=\nabla\times G$ in the distributional sense, so $P\in X$. So $X$ is a closed
  subspace of $Z$.
 \end{proof}

  \begin{prop}\label{prop:J}
    Assume $(A')_{per}$ or $(A')_{van}$. Then  we have $J\in C^1(X)$ with
    $$
      J'(P)[\tilde P] = \int_{\R^3} \psi(x,P)\cdot \tilde P\,dx 
      - \int_{\R^3} \mathcal L_\mu^{-1}P\cdot \tilde  P\,dx 
    $$
    for all $P,\tilde P\in X$. Moreover, $J'(P) = 0$ holds if and only if, in the sense of distributions,
    $$
      \nabla\times \big(\psi(x,P)-\mathcal L_\mu^{-1}P\big)=0.
    $$   
  \end{prop}
  \begin{proof}     
    The combination of Proposition~\ref{prop:Lmu} and Proposition~\ref{prop:embeddingH} implies that the
    quadratic term in $J$ is well-defined and hence smooth on $X$. Moreover, \eqref{eq:Zproperty} shows that the first
    integral is finite for $P\in X$. We skip the proof of continuous differentiability. A critical point
    $P\in X$ of $J$ is characterized by
    $$
      \int_{\R^3} \big(\psi(x,P)- \mathcal L_\mu^{-1}P\big)\cdot \tilde  P\,dx = 0
      \qquad\text{for all }\tilde P\in X.
    $$
    In particular, this identity holds for $\tilde P=\nabla\times \phi$ where $\phi\in
    C_0^\infty(\R^3;\R^3)$ is arbitrary. This implies the characterization of critical points  and we
    deduce the result.
  \end{proof}

\begin{lem} \label{lem:equivalence}
  Assume $(A)_{van}$ or $(A)_{per}$.
 Then the following two statements for $P,E$ are equivalent where $P= f(x,E)$, $E=\psi(x,P)$ with $\psi$ as
 in Proposition~\ref{prop:Apsi}:
  \begin{itemize}
    \item[(i)] $I'(E)=0$ where $\nabla\times E\in L^2(\R^3;\R^3)$ and $F(\cdot,E)\in L^1(\R^3)$.
    \item[(ii)] $J'(P)=0$ where $P\in \nabla\times \big(L^2(\R^3;\R^3)\big)$ and $\Psi(\cdot,P)\in
    L^1(\R^3)$, i.e., $P\in X$.
  \end{itemize}    
\end{lem}
\begin{proof}
  Assume first (i). Then   Proposition~\ref{prop:Apsi} imply, thanks to
  $|E|=\psi_0(x,|P|),|P|=f_0(x,|E|)$,
  \begin{align*}
    \infty
    &>\int_{\R^3} F(x,E)\,dx
    \;=\; \int_{\R^3} \int_0^{|E|} f_0(x,s)\,ds \,dx 
    \;\sim\; \int_{\R^3} f_0(x,|E|)|E| \,dx \\  
    &\sim\; \int_{\R^3} |P|\psi_0(x,|P|)  \,dx 
    \;\sim\; \int_{\R^3} \int_0^{|P|} \psi_0(x,z)\,dz \,dx 
    \;\sim\; \int_{\R^3} \Psi(x,P)\,dx.
  \end{align*}
  Moreover, $I'(E)=0$ implies $\nabla\times\mu(x)^{-1}\nabla\times E = P$ in the weak sense and thus 
  $P=\nabla\times G$ with $G:= \mu(x)^{-1}\nabla\times E\in L^2(\R^3;\R^3)$. Moreover, given
  Proposition~\ref{prop:Lmu}, there is $\psi\in \dot H^1(\R^3)$ such that $E=\mathcal
  L_\mu^{-1}(P)+\nabla\psi$. 
  So $E=\psi(x,P)$ leads to
  $$
    \nabla\times \big(\psi(x,P)-\mathcal L_\mu^{-1}P\big)
    = \nabla\times (\nabla\psi) =0.
  $$ 
  By Proposition~\ref{prop:J}, this means $J'(P)=0$ and we deduce (ii). 
  
  \medskip
  
  Now assume (ii). As above, we find that $F(\cdot,E)$ is integrable where $E:=\psi(x,P)$. Moreover, 
  $J'(P)=0$ implies $\nabla\times \big(E-\mathcal L_\mu^{-1} P\big)=0$. 
  Since $P\in L^{6/5}_{\sol}(\R^3;\R^3)$, we know $\mathcal L_\mu^{-1}P\in H$ and thus 
  $$
    \nabla\times E = \nabla\times \big(\mathcal L_\mu^{-1}P\big) \in L^2(\R^3;\R^3).
  $$ 
  Since   $\mathcal L_\mu^{-1}P$ is orthogonal to gradients, we obtain
  for all $\phi\in C_0^\infty(\R^3;\R^3)$ 
  \begin{align*}
    \int_{\R^3} \mu(x)^{-1} (\nabla\times E) \cdot (\nabla\times\phi)\,dx
    &= \int_{\R^3} \mu(x)^{-1} (\nabla\times \mathcal L_\mu^{-1}P) \cdot (\nabla\times\phi)\,dx \\
    &= \int_{\R^3}  P  \cdot \phi \,dx \\
    &= \int_{\R^3} f(x,E) \cdot \phi\,dx.
  \end{align*}
  This proves $I'(E)=0$ and thus (i).
\end{proof}
 
  As a consequence, critical points of $I$ are in one-to-one correspondence with critical points of $J$. So,
  essentially, it will be sufficient to prove the existence of infinitely many geometrically critical points
  of the latter. This will be done in the next section. In our final section we briefly collect the relevant
  material in order to deduce Theorem~\ref{thm:main}.

\section{Analysis of the dual problem}
  
   In view of Lemma~\ref{lem:equivalence} we want to prove the existence a ground state and infinitely many
   geometrically distinct bound states of the energy functional
 \begin{equation*}
    J:X\to\R,\qquad  J(P) = J_1(P) - \frac{1}{2}\int_{\R^3} \mathcal L_\mu^{-1}P\cdot P\,dx,\qquad 
    J_1(P) := \int_{\R^3} \Psi(x,P)\,dx
  \end{equation*}
  where $X$ was constructed in Lemma~\ref{lem:ReflexiveBanachSpace}. Throughout
  this section we assume that  $(A')_{per}$ or $(A')_{van}$ holds.

\begin{prop} \label{prop:estimates}
  Assume $(A')_{per}$ or $(A')_{van}$. Then
   $$
      \min\{ \|P\|^{p'},\|P\|^{q'}\}  
      \les J_1(P) 
      \les \max\{\|P\|^{p'},\|P\|^{q'}\}.  
    $$
\end{prop}
\begin{proof}
  The starting point is the following
 \begin{align*}
    J_1(P)
    &= \int_{\R^3} \left(\int_0^{|P|}  \psi_0(x,s)  \,ds\right)\,dx \\ 
    &\stackrel{(A2')}\sim  \int_{\R^3} \int_0^{|P|}    \max\big\{ (\Gamma_1(x)^{-1}
    s)^{p'-1},(\Gamma_2(x)^{-1}s)^{q'-1}\big\} \,ds  \,dx \\
     &\sim  \int_{\R^3}  \Gamma_1(x)^{1-p'}|P|^{p'} + \Gamma_2(x)^{1-q'}|P|^{q'} \,dx \\
     &= \|\Gamma_1^{-\frac{1}{p}}P\|_{p'}^{p'} + \|\Gamma_2^{-\frac{1}{q}}P\|_{q'}^{q'}.
  \end{align*}  
  As a consequence, we have by definition of the norm 
  \begin{align*}
    J_1(P)    
    \stackrel{\eqref{eq:def_norm}}\les \|P\|^{p'}+\|P\|^{q'}
    \les \max\{\|P\|^{p'},\|P\|^{q'}\}. 
  \end{align*}
  On the other hand, for    $A(t):= \max\{t^{p'/q'},t^{q'/p'}\}$,
  \begin{align*}
    \|P\|^{p'}+\|P\|^{q'}
    &\stackrel{\eqref{eq:def_norm}}\sim 
     \|\Gamma_1^{-\frac{1}{p}} P\|_{p'}^{p'}+\|\Gamma_2^{-\frac{1}{q}} P\|_{q'}^{p'} 
    + \|\Gamma_1^{-\frac{1}{p}} P\|_{p'}^{q'}+\|\Gamma_2^{-\frac{1}{q}} P\|_{q'}^{q'} \\
    &\les  J_1(P) + J_1(P)^{\frac{p'}{q'}} +  J_1(P)^{\frac{q'}{p'}} \\
    &\les  A\left(J_1(P)  \right).  
  \end{align*}
  This implies
  \begin{align*} 
    J_1(P) 
    &\gtrsim A^{-1}\left(\|P\|^{p'}+\|P\|^{q'}\right) \\ 
    &\gtrsim  \min\Big\{ (\|P\|^{p'}+\|P\|^{q'})^{\frac{p'}{q'}},(\|P\|^{p'}+\|P\|^{q'})^{\frac{q'}{p'}}\Big\}
    \\
    &\gtrsim  \min\{  \|P\|^{p'},\|P\|^{q'}\}
  \end{align*}
  and the claim is proved. 
\end{proof}

\begin{prop} \label{prop:psi_estimate}
  Assume $(A')_{per}$ or $(A')_{van}$. 
  Then we have for all $P,Q\in X$ with $(P,Q)\neq (0,0)$ and all measurable $\Omega\subset\R^3$
  $$
    \int_{\Omega} \big(\psi(x,P)-\psi(x,Q)\big)\cdot (P-Q)\,dx
    \gtrsim \|(P-Q)\ind_\Omega\|^2 \min\Big\{ (\|P\|+\|Q\|)^{p'-2},(\|P\|+\|Q\|)^{q'-2}\Big\}. 
  $$
\end{prop}
\begin{proof}
  We first prove the following inequality for almost all $x\in\Omega$ and all $P,Q\in\R^3$
  \begin{equation}\label{eq:convexity_ptws}
    \big(\psi(x,P)-\psi(x,Q)\big)\cdot (P-Q)  
    \gtrsim \max\Big\{\Gamma_1(x)^{1-p'} (|P|+|Q|)^{p'-2},\Gamma_2(x)^{1-q'} (|P|+|Q|)^{q'-2}\Big\}|P-Q|^2.
  \end{equation}
 To this end define $f(\tau):=  \psi(x,Q+\tau(P-Q))\cdot(P-Q)$. Then
  $$
    \big(\psi(x,P)-\psi(x,Q)\big)\cdot (P-Q)  = f(1)-f(0) = f'(\tau)
    \quad\text{for some }\tau\in (0,1).
  $$ 
  So it remains to estimate $f'(\tau)$ from below. To do this we recall  
  $\psi(x,\xi)=\psi_0^*(x,|\xi|)\xi$ where $\psi_0^*(x,z)=z^{-1}\psi_0(x,z)$ for almost all $x\in\R^3,z>0$. 
  From (A2') we get $\partial_z \psi_0^*(x,z)\leq 0$ and thus for $\xi_\tau:=Q+\tau(P-Q)$
  \begin{align*}
    f'(\tau) 
    &= D\psi(x,\xi_\tau)[P-Q]\cdot (P-Q) \\ 
    &= \psi_0^*(x,|\xi_\tau|)|P-Q|^2+ |\xi_\tau| \partial_z \psi_0^*(x,|\xi_\tau|)
    \big(|\xi_\tau|^{-1} \xi_\tau\cdot (P-Q)\big)^2     \\
    &\geq \big(\psi_0^*(x,|\xi_\tau|) + |\xi_\tau| \partial_z \psi_0^*(x,|\xi_\tau|)\big) 
    |P-Q|^2\\ 
    &= \partial_z \psi_0(x,z)|_{z=|\xi_\tau|}   |P-Q|^2\\
    &\stackrel{(A2')}\gtrsim  \max\Big\{\Gamma_1(x)^{1-p'}
    |\xi_\tau|^{p'-2},\Gamma_2(x)^{1-q'} |\xi_\tau|^{q'-2}\Big\}|P-Q|^2\\
    &\gtrsim \max\Big\{\Gamma_1(x)^{1-p'} (|P|+|Q|)^{p'-2},\Gamma_2(x)^{1-q'} (|P|+|Q|)^{q'-2}\Big\}|P-Q|^2.
  \end{align*}  
  Here we used $p',q'<2$. 
  
  \medskip
  
  From this we deduce  
  \begin{align*}
    \int_{\Omega} \big(\psi(x,P)-\psi(x,Q)\big)\cdot (P-Q)\,dx
    &\gtrsim \int_{\Omega}  \Gamma_1(x)^{1-p'} (|P|+|Q|)^{p'-2}|P-Q|^2\,dx \\ 
     &+ \int_{\Omega}   \Gamma_2(x)^{1-q'} (|P|+|Q|)^{q'-2}|P-Q|^2 \,dx.    
  \end{align*}
   To estimate the first term from below set $\tilde P:=\Gamma_1^{-\frac{1}{p}}P$ and $\tilde Q:=
   \Gamma_1^{-\frac{1}{p}}Q$. Then H\"older's inequality implies
    \begin{align*}
      \|\Gamma_1^{-\frac{1}{p}}(P-Q)\ind_\Omega\|_{p'}
      &= \|(\tilde P-\tilde Q)\ind_\Omega\|_{p'} \\
      &\leq \Big\||\tilde P-\tilde Q|(|\tilde P|+|\tilde Q|)^{\frac{p'-2}{2}} \ind_\Omega \Big\|_2
      \Big\| (|\tilde P|+|\tilde Q|)^{\frac{2-p'}{2}} \Big\|_{\frac{2p}{p-2}}   \\
      &\leq \left(\int_{\Omega} |\tilde P-\tilde Q|^2(|\tilde P|+|\tilde Q|)^{p'-2}\,dx\right)^{\frac{1}{2}}  
      \cdot \|   |\tilde P|+|\tilde Q| \|_{p'}^{\frac{2-p'}{2}}   \\
      &\leq \left(\int_{\Omega}  \Gamma_1(x)^{1-p'} (|P|+|Q|)^{p'-2}|P-Q|^2\,dx\right)^{\frac{1}{2}}  
      \cdot (\|\tilde P\|_{p'}+\|\tilde Q\|_{p'})^{\frac{2-p'}{2}}   \\
      &\stackrel{\eqref{eq:convexity_ptws}}\les  \left(\int_{\Omega}  \big(\psi(x,P)-\psi(x,Q)\big)\cdot
      (P-Q)\,dx\right)^{\frac{1}{2}} \cdot (\|P\| +\|Q\| )^{\frac{2-p'}{2}}.   
 \intertext{Analogously,}
      \|\Gamma_2^{-\frac{1}{q}}(P-Q)\ind_\Omega\|_{q'}      
      &\stackrel{\eqref{eq:convexity_ptws}}\les  \left(\int_{\Omega}  \big(\psi(x,P)-\psi(x,Q)\big)\cdot
      (P-Q)\,dx\right)^{\frac{1}{2}} \cdot (\|P\| +\|Q\| )^{\frac{2-q'}{2}}.   
    \end{align*}
  Squaring and summing up these two estimates we obtain  
  \begin{align*}
    \|(P-Q)\ind_\Omega\|^2
    &\sim \|\Gamma_1^{-\frac{1}{p}}(P-Q)\ind_\Omega\|_{p'}^2+
    \|\Gamma_2^{-\frac{1}{q}}(P-Q)\ind_\Omega\|_{q'}^2  \\
    &\les  \int_{\Omega} \big(\psi(x,P)-\psi(x,Q)\big)\cdot (P-Q) \,dx 
     \cdot \left(   (\|P\| +\|Q\| )^{2-p'} + (\|P\| +\|Q\| )^{2-q'} \right) \\
    &\sim  \int_{\Omega} \big(\psi(x,P)-\psi(x,Q)\big)\cdot (P-Q) \,dx 
     \cdot \max\Big\{ (\|P\| +\|Q\| )^{2-p'}, (\|P\| +\|Q\| )^{2-q'}\Big\}.
  \end{align*}
\end{proof}

  \begin{prop}\label{prop:J1}
    Assume $(A')_{per}$ or $(A')_{van}$. Then $J_1$ is weakly lower semicontinuous.
  \end{prop}
  \begin{proof}
   $J_1$ is convex, and even strictly convex, because of the estimate from
   Proposition~\ref{prop:psi_estimate}. Indeed,
  $$
    (J_1'(P)-J_1'(Q))[P-Q]
    = \int_{\R^3} \big(\psi(x,P)-\psi(x,Q)\big)\cdot (P-Q)\,dx 
    > 0
    \quad\text{for }P,Q\in X,\; P\neq Q.
  $$
  As a continuous convex functional $J_1$ is weakly lower semicontinuous.
\end{proof}

\subsection{Existence of infinitely many solutions}

Our strategy is, assuming $(A')_{per}$ or $(A')_{van}$, to verify the hypotheses of the following
Critical Point Theorem, which is a simplified version of \cite[Theorem~5]{Man_TimeHarmonic}.

\begin{thm}\label{thm:CPTheorem}
  Assume  
 \begin{itemize}
  \item[(G1)] $X$ is a Banach space.
  \item[(G2)] $J\in C^1(X)$ is even with $J(0)=0$ and, for some $\rho>0$, 
  $$
    \inf_{S_\rho} J>0
    \qquad\text{where}\qquad S_\rho = \{v\in X: \|v\|=\rho\}.
  $$
  \item[(G3)] For any given $m\in\N$ there is a finite-dimensional subspace $X_m\subset X$ such that
  $J(u)\to -\infty$ uniformly for $u\in
  X_m$ as $\|u\|\to\infty$ and $\dim(X_m)\nearrow +\infty$ as $m\to\infty$.
  \item[(G4)] $J$ is PS-attracting.
\end{itemize}
  Then $J$ has infinitely many geometrically  distinct critical points.
\end{thm}

Here, $(G_4)$ is some sort of compactness property that we shall make precise in
Proposition~\ref{prop:PScontracting}. It is responsible for our restriction to periodic or vanishing
nonlinearities in Theorem~\ref{thm:main}.

\begin{prop}[Local compactness]\label{prop:LocalCompactness}
   Assume $(A')_{per}$ or $(A')_{van}$.  Then for any PS-sequence $(P_n)$ of $J$ there is a subsequence
    $(P_{n_j})$ and a critical point $P$ of $J$ such that 
    $$
      P_{n_j}\wto P\quad\text{in }X,\qquad
      P_{n_j}\to P \quad\text{in }L^{p'}_{\loc}(\R^3;\R^3).  
    $$
  \end{prop}
  \begin{proof}
  Let $(P_n)$ be a PS-sequence $(P_n)$. Then $(P_n)$ is bounded because 
  $$
    O(1)+o(1)\|P_n\| 
    = J(P_n)-\frac{1}{2}J'(P_n)[P_n]
    = \int_{\R^3} \Psi(x,P_n) - \frac{1}{2}\psi(x,P_n)\cdot P_n \,dx
    \gtrsim \min\{ \|P_n\|^{p'},\|P_n\|^{q'}\}.
  $$
  Since $X$ is reflexive by Proposition~\ref{lem:ReflexiveBanachSpace}, we can pass to some weakly convergent
  subsequence still denoted by $(P_n)$ with weak limit $P\in X$. From $J'(P_n)\to 0$ and $P_n\wto P$ we infer, for any given bounded ball
  $B\subset\R^3$ and some $c>0$,
   \begin{align*}
      o(1) 
      &= J'(P_n)[(P_n-P)\ind_B]-J'(P)[(P_n-P)\ind_B] \\
      &= \int_B \big(\psi(x,P_n)-\psi(x,P)\big)\cdot (P_n-P)\,dx   
      -  \int_B (P_n-P) \cdot \mathcal L_\mu^{-1}(P_n-P)\,dx \\
      &\geq c\|\ind_B(P_n-P)\|^2 \min\Big\{(\|P_n\|+\|P\|)^{p'-2},(\|P_n\|+\|P\|)^{q'-2}\Big\}  
       - \int_{\R^3} (P_n-P) \cdot \ind_B \mathcal L_\mu^{-1}(P_n-P)\,dx.   
   \end{align*}  
   Here the last estimate is taken from Proposition~\ref{prop:psi_estimate}.
   The last integral converges to zero because the compact embedding from Proposition~\ref{prop:embeddingH}
   implies  $\ind_B \mathcal L_\mu^{-1}(P_n-P) \to 0$ in $L^p(\R^3;\R^3)$ thanks to $p<6$. 
   Choose $M>0$ such that $\|P_n\|+\|P\|\leq M$ for all $n\in\N$. Then we get for
   $d:=c\min\{M^{p'-2},M^{q'-2}\}$   
   \begin{align*}
      o(1)    
      &\geq d\|\ind_B(P_n-P)\|^2   
      \stackrel{\eqref{eq:def_norm}}\geq d \|\Gamma_1\|_{L^\infty(B)}^{-\frac{2}{p}}   \|P_n-P\|_{L^{p'}(B)}^2
      \qquad\text{as }n\to\infty.
   \end{align*}  
   Since  $B$ was arbitrary, this implies $P_n\to P$ in
   $L^{p'}_{\loc}(\R^3;\R^3)$. From this and $P_n\wto P, J'(P_n)\to 0$ it is
   standard to deduce $J'(P)=0$, so the claim is proved.
  \end{proof}
  
  To prove the PS-contracting property we need the following. 

\begin{prop} \label{prop:convolutionintegral}
  Assume $(A')_{per}$ or $(A')_{van}$.  Then for all $\eps>0$  there are $C_\eps,R_\eps>0$ such that for all
  $g\in X$ the following holds
  $$
    \int_{\R^3} g \cdot \mathcal L_\mu^{-1}  g \,dx
    \leq \eps \|g\|^2 + C_\eps \sup_{y\in \Z^3} \|g(y+\cdot)\|_{L^{q'}(B_{R_\eps}(0))}^2.
  $$
\end{prop}
\begin{proof}  
  We shall use 
  $$
    \int_{\R^3} g \cdot \mathcal L_\mu^{-1}  g \,dx 
    \stackrel{\eqref{eq:Lmu}}\sim \int_{\R^3} g \cdot (-\Delta)^{-1}  g \,dx
    =  \int_{\R^3} g \cdot (K\ast g) \,dx.
  $$
  Now, to estimate the integral on the right, set $K_N(x):= K(x)\ind_{1/N\leq |x|\leq N}$ for $N>1$. 
  We will prove below
  \begin{align} \label{eq:approxconvolutionintegral}
     \int_{\R^3} g \cdot \big((K-K_N)\ast g\big) \,dx
     \les o(1)\|g\|^2 \qquad\text{as }N\to\infty.    
  \end{align}
  So, for any given $\eps>0$ we can find some large enough $N_\eps$ such that
   \begin{equation*}
     \int_{\R^3} g \cdot \big((K-K_{N_\eps})\ast g\big) \,dx \leq \frac{\eps}{2} \|g\|^2
     \qquad\text{for all }g\in X.    
  \end{equation*}
  On the other hand, as
  in the proof of \cite[Lemma 3.2]{EveqWeth_Dual} we find that there are $R_\eps,C_\eps>0$ satisfying
  \begin{align*}
    \int_{\R^3} g \cdot (K_{N_\eps}\ast g) \,dx  
    &\leq   \|K_{N_\eps}\|_\infty  \int_{\R^3} \int_{\R^3} |g(x)||g(y)| \ind_{1/N_\eps\leq|x-y|\leq N_\eps}
    \,dx \,dy
    \\
    &\les_\eps   \|g\|_{q'}^{q'} \sup_{y\in\Z^3} \|g(y+\cdot)\|_{L^{q'}(B_{R_\eps}(0))}^{2-{q'}} \\
    &\les_\eps  \|g\|^{q'} \sup_{y\in\Z^3} \|g(y+\cdot)\|_{L^{q'}(B_{R_\eps}(0))}^{2-{q'}} \\
    &\leq  \frac{\eps}{2} \|g\|^2 + C_\eps  \sup_{y\in\Z^3}
    \|g(y+\cdot)\|_{L^{q'}(B_{R_\eps}(0))}^2\qquad\text{for all }g\in X.
  \end{align*}
  Combining both estimates gives the claim.
  
  \medskip
  
  It remains to prove~\eqref{eq:approxconvolutionintegral}. We shall derive this from Young's convolution
  inequality in weak Lebesgue spaces. We start with the case $q\geq 6$. Then we have,
  as $N\to\infty$,
  \begin{align*}
     \int_{\R^3} g \cdot \big((K-K_N)\ast g\big) \,dx
     &\les \int_{\R^3} |g| \big( (|\cdot|^{-1}\ind_{|\cdot|\leq 1/N})\ast  |g|\big)\,dx 
      + \int_{\R^3} |g| \big( (|\cdot|^{-1}\ind_{|\cdot|\geq N})\ast  |g|\big)\,dx \\
     &\les
     \sup_{|z-y|\leq 1/N} |\Gamma_1(z)\Gamma_1(y)|^{\frac{1}{p}}
      \int_{\R^3} |\Gamma_1^{-\frac{1}{p}} g| \Big( (|\cdot|^{-1}\ind_{|\cdot|\leq 1/N})\ast
      |\Gamma_1^{-\frac{1}{p}}  g|\Big)\,dx \\
     &+ \sup_{|z-y|\geq N} |\Gamma_2(z)\Gamma_2(y)|^{\frac{1}{q}}
      \int_{\R^3} |\Gamma_2^{-\frac{1}{q}} g| \Big( (|\cdot|^{-1}\ind_{|\cdot|\geq N})\ast
      |\Gamma_2^{-\frac{1}{q}} g|\Big)\,dx \\
     &\les \|\Gamma_1\|_\infty^{\frac{2}{p}} 
     \big\| |\cdot|^{-1}\ind_{|\cdot|\leq 1/N}\big\|_{L^{\frac{p}{2},\infty}(\R^3)} \|\Gamma_1^{-\frac{1}{p}}
     g\|_{p'}^2    \\
     &+ \|\Gamma_2\|_\infty^{\frac{1}{q}} \sup_{|z|\geq N/2}
     |\Gamma_2(z)|^{\frac{1}{q}} \cdot \big\||\cdot|^{-1}\ind_{|\cdot|\geq
     N}\big\|_{L^{\frac{q}{2},\infty}(\R^3)}\|\Gamma_2^{-\frac{1}{q}} g\|_{q'}^2 \\
     &\les o(1)  \|\Gamma_1^{-\frac{1}{p}} g\|_{p'}^2
      + o(1) \|\Gamma_2^{-\frac{1}{q}} g\|_{q'}^2 \\
      &\les o(1)  \|g\|^2.  
  \end{align*}
  In the second last estimate we used  $p<6$ as well as $q>6$ or $q=6, \Gamma_2(x)\to 0$ as $|x|\to\infty$,
  cf.~\eqref{eq:van}. To prove the counterpart for $q<6$ we choose $r>6$ such that $\Gamma_2\in
  L^{\frac{r}{r-q}}(\R^3)$. Then, as $N\to\infty$,
  \begin{align*}
     \int_{\R^3} g \cdot \big((K-K_N)\ast g\big) \,dx
     &\les \int_{\R^3} |g| \big( (|\cdot|^{-1}\ind_{|\cdot|\leq 1/N})\ast  |g|\big)\,dx 
      + \int_{\R^3} |g| \big( (|\cdot|^{-1}\ind_{|\cdot|\geq N})\ast  |g|\big)\,dx \\
     &\les o(1) \|g\|^2  + \big\||\cdot|^{-1}\ind_{|\cdot|\geq
     N}\big\|_{L^{\frac{r}{2},\infty}(\R^3)}\|g\|_{r'}^2\\
     &\les o(1) \|g\|^2  + \big\||\cdot|^{-1}\ind_{|\cdot|\geq N}\big\|_{L^{\frac{r}{2},\infty}(\R^3)}
     \|\Gamma_2^{-\frac{1}{q}}g\|_{q'}^2\|\Gamma_2^{\frac{1}{q}}\|_{\frac{rq}{r-q}}^2\\
     &\les o(1) \|g\|^2  + \big\||\cdot|^{-1}\ind_{|\cdot|\geq N}\big\|_{L^{\frac{r}{2},\infty}(\R^3)}
     \|\Gamma_2\|_{\frac{r}{r-q}}^{\frac{2}{q}}   \|g\|^2 \\
     &\sim o(1) \|g\|^2.   
  \end{align*}
  So \eqref{eq:approxconvolutionintegral} is proved.  
\end{proof} 

\begin{prop}\label{prop:PScontracting}
   Assume $(A')_{per}$ or $(A')_{van}$.  Then  $J$ is PS-contracting, i.e., for any given Palais-Smale sequences
   $(P_n)_n,(Q_n)_n$ of $J$ we have $\|P_n -Q_n \| \to 0$ as $n \to \infty$ or 
  $$
    \limsup_{n \to \infty}\|P_n-Q_n \| \geq \kappa
    \quad\text{where }\kappa:= \inf\big\{\|P-Q\|: J'(P)=J'(Q)=0, P\neq Q\big\}.  
  $$
\end{prop}
\begin{proof}
  Let $(P_n),(Q_n)$ be PS-sequences for $J$ such that $\|P_n-Q_n\|\geq \sigma>0$. We first show that
  there are $y_n\in\Z^3$ and $\delta,R>0$ such that  
  \begin{equation}\label{eq:nonvanishing}
     \liminf_{n\to\infty} \|(P_n-Q_n)(y_n+\cdot)\|_{L^{q'}(B_R(0))}\geq  \delta.
  \end{equation} 
  Recalling from Proposition~\ref{prop:LocalCompactness} that PS-sequences are bounded we set  $M:=
  \sup_{n\in\N} (\|P_n\|+\|Q_n\|) <\infty$.  
  On the one hand we have by Proposition~\ref{prop:psi_estimate}
  \begin{align*}
    &\;\int_{\R^3} (P_n-Q_n)\cdot \mathcal L_\mu^{-1}(P_n-Q_n) \,dx \\ 
    &= \int_{\R^3} \big(\psi(x,P_n)-\psi(x,Q_n)\big)\cdot (P_n-Q_n) \,dx  + o(1) \\
    &\gtrsim \|P_n-Q_n\|^2 \min\Big\{(\|P_n\|+\|Q_n\|)^{p'-2},(\|P_n\|+\|Q_n\|)^{q'-2}\Big\} + o(1) \\
    &\geq \sigma^2 \min\{M^{p'-2},M^{q'-2}\} +o(1). 
  \end{align*}
  On the other hand,  Proposition~\ref{prop:convolutionintegral} yields, for
  any given $\eps>0$,
  \begin{align*} 
    \int_{\R^3}(P_n-Q_n)\cdot \mathcal L_\mu^{-1}(P_n-Q_n) \,dx      
    &\leq \eps \|P_n-Q_n\|^2 + C_\eps   \sup_{y\in\Z^3} \|(P_n-Q_n)(y+\cdot)\|_{L^{q'}(B_{R_\eps}(0))}^2 \\ 
    &\leq \eps M^2 + C_\eps   \sup_{y\in\Z^3} \|(P_n-Q_n)(y+\cdot)\|_{L^{q'}(B_{R_\eps}(0))}^2
  \end{align*}
  for some large enough $C_\eps,R_\eps>0$ that are independent of $n$.
  Choose $\eps=\frac{1}{4} M^{-2}\sigma^2 \min\{M^{p'-2},M^{q'-2}\}$. Then  $C:=C_\eps,R:=R_\eps$ leads to
  $$
    \frac{1}{2} \sigma^2 \min\{ M^{p'-2},  M^{q'-2}\} 
    \leq C \sup_{y\in\Z^3}  \|(P_n-Q_n)(y+\cdot)\|_{L^{q'}(B_{R}(0))}^2 \qquad\text{for almost all
    }n\in\N.
  $$
  So, a reasonable choice for   $y=y_n\in\Z^3$  gives \eqref{eq:nonvanishing} where 
  $\delta:= \frac{1}{4C} \sigma^2 \min\{ M^{p'-2},M^{q'-2}\}$. 
  
  \medskip
  
  Having just verified  \eqref{eq:nonvanishing} we first finish the argument assuming $(A')_{van}$. We show
  that $(y_n)$ must be bounded in this case.  Indeed, one of the coefficient functions $\Gamma_1,\Gamma_2$
  decays to zero at infinity, cf.~\eqref{eq:van}.  By definition of the norm and $p'\geq q'$ we get
  \begin{align*}
    M
    &\geq \|P_n-Q_n\| \\ 
    &\geq  \|\Gamma_1^{-\frac{1}{p}} (P_n-Q_n)(y_n+\cdot)\|_{L^{p'}(B_R(0))} + \|\Gamma_2^{-\frac{1}{q}}
    (P_n-Q_n)(y_n+\cdot)\|_{L^{q'}(B_R(0))}\\
    &\gtrsim \|\Gamma_1^{-\frac{1}{p}} (P_n-Q_n)(y_n+\cdot)\|_{L^{q'}(B_R(0))} + \|\Gamma_2^{-\frac{1}{q}}
    (P_n-Q_n)(y_n+\cdot)\|_{L^{q'}(B_R(0))}\\
    &\stackrel{\eqref{eq:nonvanishing}}\geq  \delta
    \Big(\|\Gamma_1\|_{L^\infty(B_R(y_n))}^{-\frac{1}{p}}+\|\Gamma_2\|_{L^\infty(B_R(y_n))}^{-\frac{1}{q}}\Big),
  \end{align*}
  so $(y_n)$ must be bounded in view of~\eqref{eq:van}. Hence, readjusting $R$ allows us to  assume  w.l.o.g.
  that \eqref{eq:nonvanishing} holds for $y_n=0$. Denote by $P,Q$ the weak limits of $(P_n),(Q_n)$ such that
  $P_n\to P$ and $Q_n\to Q$ in $L^{p'}_{\loc}(\R^3;\R^3)$ and $J'(P)=J'(Q)=0$, cf. 
  Proposition~\ref{prop:LocalCompactness}. Then 
  $$ 
    \|P-Q\|_{L^{p'}(B_R(0))} 
    = \lim_{n\to\infty} \|P_n-Q_n\|_{L^{p'}(B_R(0))} 
    \stackrel{\eqref{eq:nonvanishing}}\geq \delta>0
  $$
  and in particular $P\neq Q$. This finally implies, by weak lower semicontinuity of the norm in $X$, 
  $$
    \lim_{n\to\infty} \|P_n-Q_n\|  
    \geq \|P-Q\| 
    \geq \inf\Big\{ \|\tilde P-\tilde Q\| : J'(\tilde P)=J'(\tilde
    Q)=0,\tilde P\neq \tilde Q\Big\}, 
  $$
  which is all we had to show. 
  
  \medskip
  
  In the case $(A')_{per}$ the translational $\Z^3$-invariance of the functional implies that
  $(P_n(y_n+\cdot))$ and $(Q_n(y_n+\cdot))$ are Palais-Smale sequences of $J$ as well. So
  Proposition~\ref{prop:LocalCompactness} gives, after passing to a suitable subsequence,   weak  convergence
  to some critical points $P,Q\in X$, respectively, and $$
    P_n(y_n+\cdot) \to P, \qquad Q_n(y_n+\cdot)\to Q \quad\text{in }L^{p'}(B_R(0)).
  $$ 
  Then $P\neq Q$ follows from  
  $$
    \|P-Q\|_{L^{p'}(B_R(0))} 
    = \lim_{n\to\infty} \|(P_n-Q_n)(y_n+\cdot)\|_{L^{p'}(B_R(0))}\geq \delta>0.
  $$
  From this we may conclude as above.  
\end{proof}

  \begin{thm}\label{thm:dualR3}
    Assume $(A')_{per}$ or $(A')_{van}$.  Then $J$ has   infinitely many geometrically
    distinct critical points. 
  \end{thm}
  \begin{proof}
    We verify the hypotheses of Theorem~\ref{thm:CPTheorem}. $(G1)$ is immediate and $(G2)$ follows from
    Proposition~\ref{prop:J1}.
    As to $(G_3)$, for any given $m\in\N$ one may for instance choose $X_m:=\spa\{\nabla\times\phi_1,\ldots,\nabla\times\phi_m\}\subset X$
    where $\supp(\phi_j)\subset [j,j+1]$ is such that $\phi_j$ is not a gradient. The latter ensures
    $\nabla\times\phi_j\neq 0$ and hence the linear independence of
    $\{\nabla\times\phi_1,\ldots,\nabla\times\phi_m\}$ follows from the support property. Hence,  
    $X_m\subset X$ with $\dim(X_m)=m$. Clearly, Proposition~\ref{prop:estimates} gives $J(P)\to -\infty$
    uniformly as $P\in X_m,\|P\|\to\infty$, so $(G_3)$ holds as well. Finally, $(G_4)$ holds by
    Proposition~\ref{prop:PScontracting}.  So Theorem~\ref{thm:CPTheorem} proves the claim.
  \end{proof}
   
 \subsection{Existence of a ground state} 
      
  We construct a ground state as a limit of nontrivial critical points of $J$. We start with a simple
  observation.
  
  \begin{prop} \label{prop:GSProp}
    Assume $(A')_{per}$ or $(A')_{van}$. There are $\kappa_1,\kappa_2,\kappa_3>0$ such that for all $P\in X\sm\{0\}$ satisfying $J'(P)=0$ we
    have $$
      \|P\|\geq \kappa_1,\qquad \int_{\R^3} \mathcal L_\mu^{-1}P\cdot P\,dx \geq \kappa_2,\qquad J(P)\geq \kappa_3.
    $$
  \end{prop}
  \begin{proof}
    Any nontrivial critical point satisfies  $J'(P)[P]=0$, i.e.,
    $$
      \int_{\R^3} \psi(x,P)\cdot P \,dx =  \int_{\R^3} \mathcal L_\mu^{-1}P\cdot P\,dx.
    $$
    Hence,  
   $$
      \min\{\|P\|^{p'},\|P\|^{q'}\} 
     \les   \int_{\R^3} \psi_0(x,|P|)|P|\,dx
     = \int_{\R^3} \psi(x,P)\cdot P  \,dx
     =  \int_{\R^3} \mathcal L_\mu^{-1}P\cdot P\,dx 
     \les \|P\|^2.
   $$
   From this and $p',q'<2$ we deduce   $\|P\|\geq \kappa_1$ for some $\kappa_1>0$. The lower bound
   for the integral involving $\mathcal L_\mu^{-1}$ is then a direct consequence of the chain of inequalities
   above. Finally, 
   $$
     J(P)
     = J(P)-\frac{1}{2}J'(P)[P]
     = \int_{\R^3} \Psi(x,P)-\frac{1}{2}\psi(x, P)\cdot P\,dx 
     \gtrsim   \min\{\|P\|^{p'},\|P\|^{q'}\}
     \gtrsim  \min\{\kappa_1^{p'},\kappa_1^{q'}\},
   $$
   which finishes the proof.
  \end{proof}

 \begin{thm} \label{thm:dual}
   Assume $(A')_{per}$ or $(A')_{van}$. Then $J$ admits a ground state.
 \end{thm}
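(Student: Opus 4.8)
The plan is to obtain the ground state as a minimiser of $J$ over the Nehari manifold $\mathcal N$; by Proposition~\ref{prop:minmaxlevel} every such minimiser automatically has least energy among all nontrivial critical points, so the two tasks are to produce a minimiser of $J$ on $\mathcal N$ and to verify that it is a \emph{free} critical point of $J$.

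First I would take a minimising sequence $(P_n)\subset\mathcal N$ for $c_{\mathcal N}$. Since $J'(P_n)[P_n]=0$, the computation at the end of the proof of Proposition~\ref{prop:minmaxlevel} gives $J(P_n)=J(P_n)-\tfrac12 J'(P_n)[P_n]\gtrsim\min\{\|P_n\|^{p'},\|P_n\|^{q'}\}$, and since $p',q'>1$ this forces $(P_n)$ to be bounded in $X$. Because $X$ is a closed subspace of the reflexive space $Z$, it is reflexive, so after passing to a subsequence $P_n\wto P$ in $X$. By Proposition~\ref{prop:Compactness} the operator $(-\Delta)^{-1}:X\to X^*$ is compact, hence $(-\Delta)^{-1}P_n\to(-\Delta)^{-1}P$ strongly in $X^*$ and in particular $\int_{\R^3}(-\Delta)^{-1}P_n\cdot P_n\,dx\to\int_{\R^3}(-\Delta)^{-1}P\cdot P\,dx$.

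Next I would rule out $P=0$. On $\mathcal N$ one has, again as in Proposition~\ref{prop:minmaxlevel}, $\min\{\|P_n\|^{p'},\|P_n\|^{q'}\}\lesssim\int_{\R^3}\psi(x,P_n)\cdot P_n\,dx=\int_{\R^3}(-\Delta)^{-1}P_n\cdot P_n\,dx$ together with $\|P_n\|\ge\kappa>0$; letting $n\to\infty$ yields $\int_{\R^3}(-\Delta)^{-1}P\cdot P\,dx>0$, so $P\neq 0$. Since $J_1$ is convex and continuous, it is weakly sequentially lower semicontinuous, so for each fixed $t>0$ the convergence of the quadratic term gives $J(tP)\le\liminf_n J(tP_n)\le\liminf_n\max_{s>0}J(sP_n)=\liminf_n J(P_n)=c_{\mathcal N}$, where the middle equality uses $P_n\in\mathcal N$. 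Hence $\max_{t>0}J(tP)\le c_{\mathcal N}$, while $\max_{t>0}J(tP)\ge c=c_{\mathcal N}$ by Proposition~\ref{prop:minmaxlevel}; thus equality holds, and since $P\neq 0$ this maximum is attained at $P^*:=t(P)P\in\mathcal N$, so $J(P^*)=c_{\mathcal N}=\inf_{\mathcal N}J$.

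The remaining, and I expect hardest, step is to show that the Nehari constraint is natural, i.e.\ $J'(P^*)=0$. The textbook route is a Lagrange-multiplier argument: writing $G(P):=J'(P)[P]$ one has $G(tP^*)=t\gamma'(t)$ for $\gamma(t):=J(tP^*)$, hence $G'(P^*)[P^*]=\gamma''(1)$; if this is nonzero, $\mathcal N$ is locally a $C^1$-manifold on which $P^*$ minimises $J$, so $J'(P^*)=\lambda G'(P^*)$, and testing with $P^*$ gives $0=J'(P^*)[P^*]=\lambda G'(P^*)[P^*]$, forcing $\lambda=0$. The difficulty is that $\psi_0=f_0^{-1}$ only inherits monotonicity and growth from (B) but no control on its derivative, so establishing $G\in C^1$ and the transversality $\gamma''(1)\neq 0$ directly is delicate. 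I would therefore avoid the manifold structure: assuming $J'(P^*)\neq 0$, the quantitative deformation lemma (available since $J\in C^1(X)$) produces a deformation that strictly lowers $J$ near $P^*$; applying it to the fibre $t\mapsto tP^*$ and using that $t=1$ is the strict maximum of $\gamma$ keeps the deformed path's energy strictly below $c_{\mathcal N}$, while the sign change of $t\mapsto J'(tP^*)[tP^*]=t\gamma'(t)$ (positive for small $t$, negative for large $t$) forces the deformed path to cross $\mathcal N$, contradicting $\inf_{\mathcal N}J=c_{\mathcal N}$. Thus $J'(P^*)=0$, and by Proposition~\ref{prop:minmaxlevel} the nontrivial critical point $P^*$ at level $c_{\mathcal N}=c$ is a ground state of $J$.
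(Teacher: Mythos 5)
Your argument is correct, and its core (boundedness of a minimizing sequence, weak convergence, compactness of $(-\Delta)^{-1}$ from Proposition~\ref{prop:Compactness}, and weak lower semicontinuity of the convex part $J_1$) coincides with the paper's proof; the differences lie in the organisation and in the final criticality step. The paper minimizes the reduced functional $\tilde J(P)=\max_{t>0}J(tP)$ directly and normalizes $\int_{\R^3}(-\Delta)^{-1}P_n\cdot P_n\,dx=1$, which makes nontriviality of the weak limit automatic (the normalization passes to the limit by compactness), whereas you minimize over $\mathcal N$ and rule out $P=0$ via the uniform bound $\int_{\R^3}(-\Delta)^{-1}P_n\cdot P_n\,dx\gtrsim\min\{\kappa^{p'},\kappa^{q'}\}>0$ coming from $\|P_n\|\geq\kappa$ on $\mathcal N$; both work, the normalization being slightly slicker. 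For the naturalness of the constraint, the paper simply invokes the Szulkin--Weth result that $\tilde J$ is $C^1$ away from the origin and that $\tilde J'(P_\star)=0$ forces $J'(P^\star)=0$, which needs only $J\in C^1(X)$; you correctly diagnose why the classical Lagrange-multiplier route is unavailable here (no differentiability of $\psi_0$ beyond monotonicity, so $P\mapsto J'(P)[P]$ need not be $C^1$) and replace the citation by a quantitative deformation argument along the fibre $t\mapsto tP^*$. That argument is sound and self-contained, but to make it airtight you should record the two small points it relies on: choose $\eps,s$ so that $J((1\pm s)P^*)<c_{\mathcal N}-2\eps$, so the deformation fixes the endpoints and the signs of $t\gamma'(t)$ at $t=1\mp s$ survive, giving the crossing by the intermediate value theorem; and the strictness of the maximum of $\gamma$ at $t=1$ together with a displacement bound for the deformation (small compared with $(1-s)\|P^*\|$) guarantees the crossing point is nontrivial, hence lies on $\mathcal N$, yielding the contradiction with $\inf_{\mathcal N}J=c_{\mathcal N}$. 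In short: your route trades the reference to \cite{SzuWet} for a standard deformation argument, at the cost of a few extra technical verifications; the paper's route is shorter because it outsources exactly this step.
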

  \begin{proof}
   Let $(P_n)$ be a sequence of nontrivial critical points of $J$ over $X$ such that 
   $$
     J(P_n)\to \inf \big\{J(\tilde P): J'(\tilde P)=0, \tilde P\in X\sm\{0\}\big\}.
   $$ 
   Such a sequence exists in view of Theorem~\ref{thm:dualR3}.
   Then $(P_n)$ is a PS-sequence and Proposition~\ref{prop:LocalCompactness} provides a subsequence, still
   denoted by $(P_n)$, that  converges weakly to a  $P\in X$ with $J'(P)=0$ and $P_n\to P$ in
   $L^{p'}_{\loc}(\R^3;\R^3)$. Choose $M>0$ such that $\|P\|+\|P_n\|\leq M$.
   Then Proposition~\ref{prop:convolutionintegral} implies, for any given $\eps>0$
   \begin{align*}
     \left|\int_{\R^3} \mathcal L_\mu^{-1} P_n\cdot P_n\,dx - \int_{\R^3}\mathcal L_\mu^{-1}P\cdot
     P\,dx\right| &= \left|\int_{\R^3} \mathcal L_\mu^{-1}(P_n-P)\cdot (P_n-P)\,dx + o(1) \right| \\ 
    &\leq \eps \|P_n-P\|^2 +  o(1)
    \leq \eps M^2 +  o(1)
   \end{align*}
   and thus
   \begin{equation}\label{eq:convergencevolutionintegral}
     \int_{\R^3} \mathcal L_\mu^{-1}P\cdot P\,dx
     = \lim_{n\to\infty} \int_{\R^3} \mathcal L_\mu^{-1} P_n\cdot P_n\,dx. 
   \end{equation} 
   In particular, the integral on the left is bounded from below by  $\kappa_2>0$ as in
   Proposition~\ref{prop:GSProp}, so   $P\neq 0$. We now use that $J_1$ is weakly lower
   semicontinuous, cf.~Proposition~\ref{prop:psi_estimate}. We thus find
   \begin{align*}
     J(P)
     &= J_1(P) -  \frac{1}{2}  \int_{\R^3} \mathcal L_\mu^{-1}P\cdot P\,dx  \\
     &\leq \liminf_{n\to\infty} J_1(P_n) - \frac{1}{2} \int_{\R^3} \mathcal L_\mu^{-1}P\cdot P\,dx  \\
     &\stackrel{\eqref{eq:convergencevolutionintegral}}= \liminf_{n\to\infty} \Big[\;J_1(P_n) - 
     \frac{1}{2}\int_{\R^3} \mathcal L_\mu^{-1} P_n\cdot P_n\,dx\;\Big]
     \\
     &= \liminf_{n\to\infty} J(P_n) \\
     &= \inf \big\{J(\tilde P): J'(\tilde P)=0, \tilde P\in X\sm\{0\}\big\}.
   \end{align*}
   Hence, $P$ is a ground state, which is all we had to show.
 \end{proof}

\section{Conclusion - proof of Theorem~\ref{thm:main}} \label{sec:proof}

We have to prove that the functional $I$ has infinitely many geometrically distinct points as well as a ground
states. By Lemma~\ref{lem:equivalence}, the first claim is proved once we have shown that the functional $J$
from \eqref{eq:functionalJdual} has infinitely many geometrically distinct critical points. Here, in view of 
Proposition~\ref{prop:Apsi} and assumption
$(A)_{per}$ or $(A)_{van}$, we know that $\psi(x,\cdot)=f(x,\cdot)^{-1}$ exists and satisfies
 $(A')_{per}$ or $(A')_{van}$, respectively. So infinitely many geometrically
distinct points of $J$ exist by Theorem~\ref{thm:dualR3}.
These provide infinitely many geometrically distinct critical points of $I$ by Lemma~\ref{lem:equivalence}.

\medskip

The existence of a ground state $P^\star\in X\sm \{0\}$ of $J$ was proved in Theorem~\ref{thm:dual}, so
Lemma~\ref{lem:equivalence} and the same argument as in \cite[Theorem~15]{Man_Nonlocal} prove that this
ground state  produces a ground state $E^\star$ of $I$ via $E^\star:= \psi(x,P^\star)$.
So we conclude that $I$ admits a ground state as well. This finishes the proof. \qed

\bibliographystyle{abbrv}
\bibliography{biblio}

\end{document}